\documentclass[10 pt]{amsart}



\usepackage[colorlinks=true,linkcolor=blue,citecolor=blue,urlcolor=blue,hypertexnames=false]{hyperref}
\usepackage{bookmark}
\usepackage{amsthm,thmtools,amssymb,amsmath,amscd}

\usepackage{fancyhdr}
\usepackage{esint}

\usepackage{enumerate}

\usepackage{pictexwd,dcpic}

\swapnumbers
\declaretheorem[name=Theorem,numberwithin=section]{thm}
\declaretheorem[name=Remark,style=remark,sibling=thm]{rem}
\declaretheorem[name=Lemma,sibling=thm]{lemma}

\declaretheorem[name=Corollary,sibling=thm]{cor}
\declaretheorem[name=Assumption,style=remark,sibling=thm]{ass}

\numberwithin{equation}{section}

\usepackage{cleveref}
\crefname{lemma}{Lemma}{Lemmata}
\crefname{prop}{Proposition}{Propositions}
\crefname{thm}{Theorem}{Theorems}
\crefname{cor}{Corollary}{Corollaries}
\crefname{defn}{Definition}{Definitions}
\crefname{example}{Example}{Examples}
\crefname{rem}{Remark}{Remarks}
\crefname{ass}{Assumption}{Assumptions}
\crefname{notation}{Notation}{Notation}

\usepackage{autonum}

\renewcommand{\~}{\tilde}
\renewcommand{\-}{\bar}
\newcommand{\bs}{\backslash}
\newcommand{\cn}{\colon}
\newcommand{\sub}{\subset}

\newcommand{\R}{\mathbb{R}}

\renewcommand{\S}{\mathbb{S}}
\renewcommand{\H}{\mathbb{H}}

\newcommand{\8}{\infty}

\renewcommand{\a}{\alpha}
\renewcommand{\b}{\beta}
\newcommand{\g}{\gamma}
\renewcommand{\d}{\delta}

\renewcommand{\k}{\kappa}
\renewcommand{\l}{\lambda}

\newcommand{\s}{\sigma}
\newcommand{\p}{\varphi}

\newcommand{\vt}{\vartheta}

\newcommand{\G}{\Gamma}



\newcommand{\del}{\partial}

\newcommand{\n}{\nabla}

\newcommand{\fr}[2]{\frac{#1}{#2}}
\newcommand{\x}{\times}

\DeclareMathOperator{\id}{id}

\DeclareMathOperator{\graph}{graph}

\DeclareMathOperator{\dist}{dist}

\newcommand{\Theo}[3]{\begin{#1}\label{#2} #3 \end{#1}}
\newcommand{\pf}[1]{\begin{proof} #1 \end{proof}}
\newcommand{\eq}[1]{\begin{equation}\begin{alignedat}{2} #1 \end{alignedat}\end{equation}}

\newcommand{\br}[1]{\left(#1\right)}


\newcommand{\ra}{\rightarrow}
\newcommand{\hra}{\hookrightarrow}

\newcommand{\mc}{\mathcal}

\newcommand{\mrm}{\mathrm}

\newcommand{\hp}{\hphantom}



\begin{document}

\title[Pinched flows in Euclidean and hyperbolic space]{Expansion of pinched hypersurfaces of the Euclidean and hyperbolic space by high powers of curvature}
\begin{abstract}
We prove convergence results for expanding curvature flows in the Euclidean and hyperbolic space. The flow speeds have the form $F^{-p}$, where $p>1$ and $F$ is a positive, strictly monotone and $1$-homogeneous curvature function. In particular this class includes the mean curvature $F=H$. We prove that a certain initial pinching condition is preserved and the properly rescaled hypersurfaces converge smoothly to the unit sphere. We show that an example due to Andrews-McCoy-Zheng can be used to construct strictly convex initial hypersurfaces, for which the inverse mean curvature flow to the power $p>1$ loses convexity, justifying the necessity to impose a certain pinching condition on the initial hypersurface.
\end{abstract}
\keywords{Inverse curvature flow; Pinching}
\subjclass[2010]{53C21, 53C44}
\date{\today}

\author[H. Kr\"oner]{Heiko Kr\"oner }
\address{Albert-Ludwigs-Universit\"{a}t,
Mathematisches Institut, Ersnt-Zermelo-Str.~1, 79104
Freiburg, Germany}
\email{heiko.kroener@uni-hamburg.de}

\author[J. Scheuer]{Julian Scheuer }
\address{Albert-Ludwigs-Universit\"{a}t,
Mathematisches Institut, Ernst-Zermelo-Str.~1, 79104
Freiburg, Germany}
\email{julian.scheuer@math.uni-freiburg.de}
\thanks{This work has been supported by the "Deutsche Forschungsgemeinschaft" (DFG, German research foundation) within the research grant "Harnack inequalities for curvature flows and applications", grant number SCHE 1879/1-1.
}
\maketitle


\section{Introduction}

We consider inverse curvature flows in either the Euclidean space $N=\R^{n+1} $ or the hyperbolic space $N=\H^{n+1}$ with sectional curvature $K_N=-1$, i.e. a family of embeddings
\eq{x\cn [0,T)\x M\ra N,}
where $M$ is a closed, connected and orientable manifold of dimension $n$, which solves
\eq{\label{flow}\dot{x}=\fr{1}{F^p}\nu,\quad 1<p<\infty.} Here $\nu=\nu(t,\xi)$ is the outward pointing normal to the flow hypersurface $M_t=x(t,M)$ and $F$ is evaluated at the principal curvatures $\k_i$ at the point $x(t,\xi)$.

Let us first state our main theorem. Therefore we will use the following assumptions for the curvature function $F$ and the initial hypersurface $M_0$.

\begin{ass} \label{1}
Let $n\ge 2$ and 
\begin{equation}
\Gamma_{+} = \{(\kappa_i)\in \mathbb{R}^n: \kappa_i>0, 1 \le i \le n\}.
\end{equation}
Let $F \in C^{\infty}(\Gamma_{+})$ be a positive, strictly monotone and symmetric curvature function which is homogeneous of degree one and normalized to $F(1, ..., 1)=n$. 
\end{ass}
\begin{ass} \label{5}
Let $F$ satisfy Assumption \ref{1}. Let
\begin{equation}
x_0: M \rightarrow M_0 \subset N
\end{equation}
be the smooth embedding of a hypersurface $M_{0}$ with $F_{|M_0}>0,$ which can be written as a graph over a geodesic sphere $\mathbb{S}^n$,
\begin{equation}
M_0 = \{(u(0, y),y); y \in \mathbb{S}^n\}.
\end{equation}
\end{ass}

In the following statement of our main theorem, we let 
\eq{H=\sum_{i=1}^n \k_i}
be the mean curvature, $A$ the Weingarten operator, $\|A\|$ be its norm with respect to the induced metric and $I$ the identity. The principal curvatures $\k_{i}$, $1\leq i\leq n$, will always be labelled according to 
\eq{\k_{1}\leq \dots\leq \k_{n}.}

\Theo{thm}{main_result}{
Let $n\ge 2$ and let $N=N^{n+1}$ be either the Euclidean space or the hyperbolic space of constant sectional curvature $K_N=-1.$ Let $p>1$ and let $F$, $M_0$ satisfy \Cref{1,,5}.
Furthermore, we assume that in case $K_N=0$,
$M_0$ satisfies the pinching condition
\begin{equation} \label{3}
\|A\|^2-\frac{1}{n}H^2 < c_0H^2,
\end{equation}
and in case $K_N=-1$, that $M_0$ satisfies the pinching condition
\eq{\label{main-2}\|A-I\|^2-\fr 1n (H-n)^2< c_0 (H-n)^2,}
 where $0<c_0=c_0(F,n,p)<\frac{1}{n(n-1)}$ is sufficiently small.
Then:
\begin{enumerate}
\item[(i)] There exists a unique smooth solution on a maximal time interval
\begin{equation} \label{2}
x: [0, T^{*})\times M \rightarrow N,
\end{equation}
of the equation
\begin{equation} \label{4}
\begin{aligned}
\dot x =& \frac{1}{F^p}\nu  \\
x(0, \xi) =& x_0(\xi),
\end{aligned}
\end{equation}
where $T^*<\8$ in case $K_N=0$ and $T^*=\8$ in case $K_N=-1$,
where $\nu = \nu(t, \xi)$ is the outward unit normal to $M_t=x(t, M)$ at $x(t, \xi)$ and $F$ is evaluated at the principal curvatures of $M_t$ at $x(t, \xi)$.

\item[(ii)] The flow hypersurfaces $M_t$ can be written as graphs of a function $u=u(t, \cdot)$ over $\mathbb{S}^n$ so that
\begin{equation}
\lim_{t\rightarrow T^{*}}\inf_{\mathbb{S}^n}u(t, \cdot) =\infty
\end{equation}
and properly rescaled flow hypersurfaces converge for all $m \in \mathbb{N}$ in $C^m(\mathbb{S}^n)$ to a geodesic sphere.

\item[(iii)] In case $K_N=0$ there exists a point $Q \in \mathbb{R}^{n+1}$ and a sphere $S^{*}=S_{R^{*}}(Q)$ around $Q$ with radius
$R^{*}$ such that the spherical solutions $S_t$ with radii $R_t$ of \eqref{4} with $M_0=S_{R^{*}}$ satisfy
\begin{equation}
\dist (M_t, S_t) \le c R_t^{-\frac{p}{2}} \quad \forall \ t \in [0, T^{*}),
\end{equation}
$c=c(p, M_0, F)$. Here $\dist$ denotes the Hausdorff distance of compact sets.
\end{enumerate}
}

\begin{rem}
(i) The rescalings, mentioned in (ii) of this theorem, are given by
\eq{\~u=\fr{u}{t}}
in case $K_N=-1$ and by
\eq{\~u=\fr{u}{\Theta}}
in case $K_N=0,$ where $\Theta$ is the unique radius of a sphere which exists exactly as long as the flow, i.e. for the time $T^*.$

(ii) A result similar to \Cref{main_result} (iii) can not be deduced in the hyperbolic space, cf. the nice counterexample in \cite{HungWang:09/2015}.

(iii) For flows by high powers of curvature, pinching conditions similar to \eqref{main-2} and \eqref{4} have already appeared for contracting flows in \cite{AndrewsMcCoy:03/2012} and \cite{Schulze:/2006}. Indeed we can mimic a counterexample to preserved convexity for contracting flows by Andrews-McCoy-Zheng \cite[Thm.~3]{AndrewsMcCoyZheng:07/2013} and show that in general strict convexity (in particular general pinching) will be lost if $p>1$ and $F=H$.
\end{rem}

Theorem \ref{main_result} shows the following: If we remove the assumptions $\Gamma=\Gamma_{+}$, $F_{|\partial \Gamma}=0$ as well as the concavity of $F$  in \cite[Theorem 1.2]{Gerhardt:01/2014} and \cite[Theorem 1.2]{Scheuer:05/2015} and replace them by a certain pinching condition for the initial hypersurface then the resulting theorems are true. This allows us e.g. to consider the interesting case $F=H$ and more generally
\eq{F=H_k^{\fr 1k},}
where $H_k$ is the $k$-th elementary symmetric polynomial.

Expanding curvature flows of the form \eqref{flow} with $p=1$ have attracted a lot of attention, since they have proven to be useful in the deduction and generalization of several geometric inequalities, like the Riemannian Penrose inequality, \cite{HuiskenIlmanen:/2001}, some Minkowski type  inequalities,  \cite{BrendleHungWang:01/2016} and \cite{GuanLi:08/2009}, and Alexandrov-Fenchel type inequalities as in \cite{GeWangWu:04/2014}. The case $p=1$ has been treated in broad generality, cf. \cite{Gerhardt:/1990} and \cite{Urbas:/1990,Urbas:/1991} for the Euclidean case, \cite{Gerhardt:11/2011} for the hyperbolic case and \cite{ChenMao:04/2018,Ding:01/2011,Gerhardt:/2015,Kroner:/2018,LiWei:/2017,Lu:09/2016,MakowskiScheuer:11/2016,Scheuer:01/2017,Zhou:04/2018} for other ambient spaces. For the case $p\neq 1$ there are fewer results. In the Euclidean case there is \cite{Gerhardt:01/2014,Li:06/2010,Schnuerer:/2006} and in the hyperbolic case there is \cite{Scheuer:05/2015}. All of these papers share the fact that only curvature functions $F$ are considered which vanish on the boundary of $\G_{+}$, a property which forces preservation of convexity by definition. The goal of the present paper is the generalisation of both of these works. The second author already obtained an improvement of \cite{Scheuer:05/2015} in \cite{Scheuer:01/2015}, where he could drop the pinching assumption on the initial hypersurface for some powers $p>1$ of the inverse Gauss curvature flow. But in particular for powers $p>1$ of the inverse mean curvature flow there are no results, up to the authors' knowledge. In the recent preprint \cite{LiWangWei:09/2016} Li, Wang and Wei proved convergence results for the case $p<1$ in $\R^{3}$ and $\H^{3}$. The novelty in this paper is that they could also treat non-concave curvature functions, since for parabolic equations in two variables one can replace the Krylov-Safonov estimates by a regularity result due to Andrews \cite{Andrews:/2004}. Wei provided some new pinching estimates in the cases $p<1$ for a broad class of curvature functions in \cite{Wei:/2018}.   Some non-homogeneous flow speeds are considered in \cite{ChowTsai:/1997} and \cite{ChowTsai:/1998}.

The paper is organised as follows. In \cref{Not,Ev} we collect some notation and evolution equations. In \cref{CEx} we give the counterexample. In \cref{PE} we prove the crucial preservation of a specific pinching of the initial hypersurface, whereafter in \cref{EC,HC} we give an outline  on how to finish the proof of \cref{main_result}. Here one could basically follow the lines of \cite{Gerhardt:01/2014} and \cite{Scheuer:05/2015}, but due to the pinching estimates several aspects of the proofs in these references simplify so we present these simplified arguments for convenience.

\section{Notation and preliminaries}\label{Not}

Let $n\geq 2$ and $N=N^{n+1}$ be either the Euclidean space or the hyperbolic space with constant sectional curvature $K_N=-1$ of dimension $n+1$. Let $M=M^n$ be a compact, connected, smooth manifold and 
\eq{x\cn M\hra N}
be an embedding with unit outward normal vector field $\nu,$
(compare the nice note \cite{Samelson:07/1969}).
Let $g=(g_{ij})$ be the induced metric on $M$, where $g_{ij}$ are the components of $g$ with respect to the basis $x_i=\del_i x$, $1\leq i\leq n$. In tensor expressions latin indices always range between $1$ and $n$ and greek indices range from $0$ to $n$ indicating components of tensors of the ambient space. The coordinate expression of a covariant derivative with respect to the Levi-Civita connection of $g$ of a tensor field $T\in T^{k,l}M$ are indicated by a semi-colon,
\eq{\n^r T=\br{T^{i_1\dots i_k}_{j_1\dots j_l;m_1\dots m_r}}.}
The second fundamental form $h=(h_{ij})$ is given by the Gaussian formula
\eq{x_{;ij}=-h_{ij}\nu}
and the Weingarten map is denoted by $A=(h^i_j)$.

For any $q\in N$ the pointed Euclidean as well as the hyperbolic space $N\bs\{q\}$ is diffeomorphic to $(0,\8)\x \S^n$ and is covered by geodesic polar coordinates. The metric is given by
\eq{\-g_{\a\b}=dr^2+\vt^2(r)\s_{ij}\equiv dr^2+\-g_{ij},}
where $r$ is the geodesic distance to $q$, $\s$ is the round metric on $\S^n$
and
\eq{\vt(r)=\begin{cases} r, &N=\R^{n+1}\\
						\sinh r, &N=\H^{n+1}.\end{cases}}
The principal curvatures $\-\k_i$ of the coordinates slices $\{r=\mrm{const}\}$ are given by
\eq{\-\k_i=\fr{\vt'(r)}{\vt(r)}.} Since our hypersurfaces will all be convex, they can be written as graphs in geodesic polar coordinates over $\S^n,$
\eq{M=\{(u(y),y)\cn y\in \S^n\},}
where $u$ is a smooth function on $\S^n.$
Define
\eq{v=\sqrt{1+|Du|^2}=\sqrt{1+\-g^{ij}u_{;i}u_{;j}}.}
In terms of a graph, the second fundamental form of $M$ can be expressed as
\eq{h_{ij}v^{-1}=-u_{;ij}+\-h_{ij},}
cf. \cite[Rem.~1.5.1]{Gerhardt:/2006}.

Let us also make some comments on the speed functions under which the family of embeddings evolve. By \cref{1} these are given by smooth, symmetric functions $F$ on an open, symmetric and convex cone $\G\sub\R^{n}$. It is well known, that such a function can be written as a smooth function of the elementary symmetric polynomials $s_{k}$,
\eq{F=\rho(s_{1},\dots,s_{n}),}
compare \cite{Glaeser:01/1963}. The associated functions to $s_{k}$, which are defined on endomorphisms of the tangent space, are traditionally denoted by $H_{k}$ and given by
\eq{H_{k}(A)=\fr{1}{k!}\fr{d}{dt}\det(I+tA)_{|t=0}}
for all $A\in T^{1,1}(M)$, cf. \cite[equ.~(2.1.31)]{Gerhardt:/2006}. Hence also $F$ can be viewed as a function defined on the endomorphism bundle $T^{1,1}(M)$, i.e.
\eq{F=F(A)=F(h^{i}_{j}).}
We will, however, mostly use a different description of $F$, namely as being defined on two variables $(g,h)$ by setting
\eq{\mc{F}(g,h)=F\br{\fr 12 g^{ik}(h_{jk}+h_{kj})},}
where $(g^{ik})$ is the inverse of the positive definite $(0,2)$-tensor $g$ and $h\in T^{0,2}(M)$. We denote by $\mc{F}^{ij}$ and $\mc{F}^{ij,kl}$ the first and second derivatives of $\mc{F}$ with respect to $h$, i.e.
\eq{\mc{F}^{ij}=\fr{\del \mc{F}}{\del h_{ij}},\quad \mc{F}^{ij,kl}=\fr{\del^{2}\mc{F}}{\del h_{ij}\del h_{kl}}.}
Due to the monotonicity assumption on $F$ as a function of the principal curvatures, the tensor $(\mc{F}^{ij})$ is positive definite at all pairs $(g,h)$, such that $(g^{ik}(h_{kj}+h_{jk}))$ has eigenvalues in $\G$. Compare \cite{Andrews:/2007}, \cite[Ch.~2]{Gerhardt:/2006} and \cite{Scheuer:06/2018} for more details.

We also note that we will in the sequel use the same symbol $F$ for both functions $F$ and $\mc{F}$. This will not cause confusion, since in expressions like $F^{ij}h_{ik}h^{k}_{j}$ it only makes sense to think of $\mc{F}$. 

We always assume that $p>1$, set $\Phi(r)=-r^{-p}$ for $r>0$ and
write 
\eq{\Phi'(r) = \frac{d}{dr}\Phi(r),\quad \dot \Phi = \frac{d(\Phi \circ F)}{dt}.}

\section{Evolution equations}\label{Ev}
The proof of the following evolution equations is given in \cite[Lemma~2.3.4]{Gerhardt:/2006}, \cite[Lemma~3.3.2]{Gerhardt:/2006} and \cite[Lemma~5.8]{Gerhardt:01/1996}.

\Theo{lemma}{EvEq_2}{
Under the flow \eqref{4} the geometric quantities 
\eq{\Phi=-F^{-p},\quad \chi=\fr{v}{\vt(u)}} and $u$
evolve as follows
\eq{\label{401}
\dot \Phi- \Phi' F^{ij}\Phi_{;ij} =& \Phi' F^{ij}h_{ik}h^k_j\Phi+K_N\Phi'F^{ij}g_{ij}\Phi,}
\eq{\dot \chi - \Phi' F^{ij}\chi_{;ij} =& - \Phi' F^{ij}h_{ik}h^k_j\chi-2\chi^{-1} \Phi' 
 F^{ij}\chi_{;i}\chi_{;j}+\{\Phi' F + \Phi\}\frac{\bar H}{n}v \chi,}
 and
\eq{\dot u - \Phi' F^{ij}u_{;ij} =& v^{-1}(1+p)F^{-p}- \Phi' F^{ij}\bar h_{ij},}
 where $\-H$ denotes the mean curvature of the coordinate slice $\{r=u\}.$
}

\Theo{lemma}{EvEq}{
Under the flow \eqref{4} the Weingarten map form evolves by
\eq{\label{Evh}\dot{h}^i_j-\Phi'F^{kl}h^i_{j;kl}&=\Phi'F^{kl}h_{rk}h^r_lh^i_j-(\Phi'F-\Phi)h^i_kh^k_j+K_N(\Phi+\Phi'F)\d^i_j\\
			&\hp{=}-K_N\Phi'F^{kl}g_{kl}h^i_j+\Phi^{kl,rs}h_{kl;j}{h_{rs;}}^i.}
For $K_N=-1$ the tensor 
\eq{b^i_j=h^i_j-\d^i_j}
evolves by
\eq{\label{EvBij}\dot{b}^i_j-\Phi'F^{kl}b^i_{j;kl}&=\Phi'F^{kl}b_{rk}b^r_l b^i_j+\Phi'F^{kl}b_{rk}b^r_l \d^i_j+2\Phi b^i_j\\
			&\hp{=}-\br{\Phi'F-\Phi}b^i_k b^k_j +\Phi^{kl,rs}b_{kl;j}{b_{rs;}}^i	}
and hence $B=b^i_i$ evolves by
\eq{\label{EvB}\dot{B}-\Phi'F^{kl}B_{kl}&=\Phi'F^{kl}b_{rk}b^r_lB+n\Phi'F^{kl}b_{rk}b^r_l+2\Phi B\\
						&\hp{=}-\br{\Phi'F-\Phi}b^i_k b^k_i+\Phi^{kl,rs}b_{kl;i}{b_{rs;}}^i.}
}

\pf{
\eqref{Evh} is given in \cite[Lemma~2.4.3]{Gerhardt:/2006}, hence we only prove \eqref{EvBij}. 
\eq{\dot{b}^i_j-\Phi'F^{kl}b^i_{j;kl}&=\Phi'F^{kl}h_{rk}h^r_lh^i_j-(\Phi'F-\Phi)h^i_kh^k_j-(\Phi+\Phi'F)\d^i_j\\
			&\hp{=}+\Phi'F^{kl}g_{kl}h^i_j+\Phi^{kl,rs}h_{kl;j}{h_{rs;}}^i\\
            &=\Phi'F^{kl}\br{h_{rk}h^r_l-2h_{kl}+g_{kl}}h^i_j+2\Phi'Fh^i_j\\
            &\hp{=}-\br{\Phi'F-\Phi}\br{h^i_k-\d^i_k}\br{h^k_j-\d^k_j}-2\br{\Phi'F-\Phi}\br{h^i_j-\d^i_j}\\
            &\hp{=}-\br{\Phi'F-\Phi}\d^i_j-\br{\Phi'F+\Phi}\d^i_j+\Phi^{kl,rs}h_{kl;j}{h_{rs;}}^i.}
Rearranging gives the result.
}

\section{A counterexample to preserved convexity}\label{CEx}

For contracting flows, i.e. flows of the form
\eq{\dot{x}=-\Phi\nu}
with positive $\Phi$, Andrews-McCoy-Zheng \cite{AndrewsMcCoyZheng:07/2013} gave an example of a (weakly) convex hypersurface in $\R^{n+1}$ which develops a negative principal curvature instantly for a certain class of speeds $\Phi$. By continuity with respect to initial values this shows that for these $\Phi$ one can also find strictly convex initial hypersurfaces which develop negative principal curvatures quickly. 

In this section we briefly sketch that we generally face the same phenomenon in our case of expanding flows. Precisely we will see that for powers $p>1$ of the inverse mean curvature flow convexity might be lost, indicating that a stronger pinching condition as for example in \cref{main_result} is needed. 

We will show that the loss of convexity can occur along the flow
\eq{\label{CE}\dot{x}=\fr{1}{H^{p}}\nu,\quad p>1,}
for simplicity in $\R^{3}$. Contrary to the contracting case, where a more sophisticated speed is needed, here we can make use of the strong concavity of the function $\Phi=-H^{-p}$, which gives an additional negative term in 
\eq{\Phi^{kl,rs}=\fr{p}{H^{p+1}}H^{kl,rs}-\fr{p(p+1)}{H^{p+2}}H^{kl}H^{rs}=-\fr{p(p+1)}{H^{p+2}}g^{kl}g^{rs}.}
Let us briefly recall the method in \cite[Thm.~3]{AndrewsMcCoyZheng:07/2013} how to construct such a convex initial hypersurface. First they construct a local graph using the function
\eq{u(\xi)=\fr{c_{1}}{24}\xi_{1}^{4}+\fr{1}{2}\br{a_{2}+b_{2}\xi_{1}+\fr 12 c_{2}\xi_{1}^{2}}\xi_{2}^{2},}
where $a_{2},b_{2}$ are arbitrary positive numbers and 
\eq{c_{1}=\fr{1}{4},\quad c_{2}=\fr{2b_{2}^{2}}{a_{2}}+\fr{1}{4}}
 There holds $u(0)=Du(0)=0$ and hence at $\xi=0$, compare \cite[equ.~(16),(17),(18)]{AndrewsMcCoyZheng:07/2013},
\eq{h_{ij}=u_{,ij},}
\eq{h_{ij;k}=u_{,ijk}}
and
\eq{h_{ij;kl}=u_{,ijkl}-u_{,ij}u_{,km}{u_{,l}}^{m}-u_{,ki}u_{,jm}{u_{,l}}^{m}-u_{,kj}u_{,im}{u_{,l}}^{m},}
where indices appearing after a comma denote usual partial derivatives.
In the proof of \cite[Thm.~2]{AndrewsMcCoyZheng:07/2013} it is shown that the graph of $u$ over a small ball $B_{r}(0)$ is a convex hypersurface, which is strictly convex in $\xi\neq 0$, and that this graph can be closed up to a convex hypersurface, respecting the strict convexity in $\xi\neq 0$. Due to $H(0)=a_{2}>0$, the hypersurface is strictly mean convex and \eqref{CE} is defined for short time.

It remains to show that under the flow \eqref{CE}, the entry $h_{11}(t,0)$ of the second fundamental form, which equals zero at $t=0$, drops below zero instantly. According to \eqref{Evh} we have at $(t,\xi)=(0,0)$:

\eq{\dot{h}_{11}&=\fr{p}{a_{2}^{p+1}}(h_{11;11}+h_{11;22})-\fr{p(p+1)}{a_{2}^{p+2}}(h_{11;1}+h_{22;1})^{2}\\
		&=\fr{p}{a_{2}^{p+1}}(c_{1}+c_{2})-\fr{p(p+1)}{a_{2}^{p+2}}b_{2}^{2}\\
		&=\fr{p}{a_{2}^{p+2}}\br{\fr{a_{2}}{2}+2b_{2}^{2}-(p+1)b_{2}^{2}}\\
		&=\fr{p}{a_{2}^{p+2}}\br{\fr{a_{2}}{2}+(1-p)b_{2}^{2}}\\
		&<0}
for a suitable arrangement of $a_{2}$ and $b_{2}$, due to $p>1$. Hence $h_{11}$ drops below zero instantly and the example is complete.

\section{The pinching estimates}\label{PE}
We define $b_{ij}=h_{ij}+K_Ng_{ij}$ where $K_N=-1$ or  $K_N=0$. We set $B = b^i_i$ and $\|b\|^2=b^i_j b^j_i$ in both cases.

\Theo{lemma}{pinching}{
Let $T^{*}>0$ and let $x$ be a solution of \eqref{4} on $[0, T^{*})$. If $M_0$ satisfies the pinching condition
\begin{equation} \label{3_new}
\|b\|^2-\frac{1}{n}B^2 < c_0B^2,
\end{equation}
 where $0<c_0=c_0(F,n,p)<\frac{1}{n(n-1)}$ is sufficiently small, then \eqref{3_new} remains valid for all $t \in [0, T^{*})$.
}

\begin{proof} 
While in the Euclidean case the proof is similar to the proof of \cite[Prop.~3.4]{Scheuer:07/2016}  we need further arguments in the hyperbolic case.
We begin with some facts which hold in both cases.
Let $\gamma = \frac{1}{n}+c_0$ and define the quantity
\begin{equation}
z= \|b\|^2-\gamma B^2.
\end{equation}
Then, due to \cref{EvEq}, $z$ satisfies the evolution equation
\begin{equation} \label{7_new}
\begin{aligned}
\dot z -\Phi' F^{kl}z_{;kl}
=&  2 \Phi' F^{kl}b_{rk}b^r_l z  - 2 (\Phi'F-\Phi)\left(b^i_kb^k_jb^j_i - \gamma B\|b\|^2\right)\\ 
		& + 2 \Phi^{kl, rs}b_{kl;i}b_{rs;}^{\ \ \ j}\left(b^i_j-\gamma B\delta^i_j\right) - 2 \Phi'F^{kl}\left(b^i_{j;k}b^j_{i;l}-\gamma B_kB_l\right)\\
& -K_N\left\{4\Phi z - 2nc_0 \Phi'F^{kl}b_{rk}b^r_lB\right\}
\end{aligned}
\end{equation}
and due to \cite[Lemma 2.1]{AndrewsMcCoy:03/2012} we have
\begin{equation} \label{And_deriv_new}
\begin{aligned}
g^{kl}\left(b^i_{j;k}b^j_{i;l}-\frac{1}{n}B_kB_l\right) =& \left\|\nabla\left(A-\frac{1}{n}H\id\right)\right\|^2\ge \frac{2(n-1)}{3n}\|\nabla A\|^2 \\
\ge& \frac{2(n-1)}{n(n+2)}\|\nabla H\|^2= \frac{2(n-1)}{n(n+2)}\|\nabla B\|^2.
\end{aligned}
\end{equation} 

(i) Let us now assume that $K_N=0$.
In view of (\ref{3_new}) we have $z(0, \cdot) <0$.
We want to show that $z(t, \cdot)<0$ for all $0 \le t < T^{*}$. For this we assume that there is a minimal $0<t<T^{*}$ and $x\in M_t$
so that $z(t,x)=\sup z(t, \cdot) =0$.  
From \cite[Lemma 2.3]{AndrewsMcCoy:03/2012} we deduce  that
\begin{equation}
\begin{aligned}
b^i_k b^k_j b^j_i -  \gamma B \|b\|^2&\ge c_0 \gamma \left(1-\sqrt{n(n-1)c_0}\right)B^3 \\ &= c_0\left(1-\sqrt{n(n-1)c_0}\right)\|b\|^2B > 0
\end{aligned}
\end{equation}
in $(t,x)$.

Using \eqref{7_new} we conclude that 
\begin{equation} \label{8_new}
\begin{aligned}
0 \le &- 2c_0 (\Phi'F-\Phi)\left(1-\sqrt{n(n-1)c_0}\right) \|b\|^2B\\ 
 &+ 2 B \Phi^{kl, rs}b_{kl;i}b_{rs;}^{\ \ \ j} \left(B^{-1}b^i_j-\gamma\delta^i_j\right) \\
& -\frac{2(n-1)}{n(n+2)}\Phi'\|\nabla B\|^2-\frac{2(n-1)}{3n}\Phi'\|\nabla b\|^2 \\
& - 2 \Phi'\left(F^{kl}-g^{kl}\right)\left(b^i_{j;k}b^j_{i;l}-\frac{1}{n} B_kB_l\right)\\
&+2 c_0 \Phi' \|\nabla B\|^2+2c_0 \Phi' \left(F^{kl}-g^{kl}\right)B_kB_l
\end{aligned}
\end{equation}
in $(t,x)$. Due to \cite[equ.~(4.3)]{AndrewsMcCoy:03/2012},
\begin{equation}
\|F^{kl}-g^{kl}\|\leq \mu\sqrt{c_{0}}
\end{equation}
for a constant $\mu$ only depending on $n$ and $F$, so the terms in the second, fourth and fifth line of (\ref{8_new}) can be absorbed by the terms
in the third line of (\ref{8_new}). Then the right-hand side of (\ref{8_new}) is negative, a contradiction. Note, to estimate the term in the second line of (\ref{8_new}) we used  the homogeneity of $F$,  
\begin{equation}
\Phi^{kl,rs} = \Phi'F^{kl,rs} + \Phi'' F^{kl}F^{rs}
\end{equation}
and
\begin{equation}
c_1 \le \frac{F}{H} \le c_{2}
\end{equation}
in $(t, x)$ where $c_i$ are positive constants depending only on $c_0$. For further details we refer to \cite[equ.~(4.2), (4.3)]{AndrewsMcCoy:03/2012}.

(ii) We assume that $K_N=-1$. The quantity
\begin{equation}
\tilde z = z+\alpha e^{-\Lambda t},
\end{equation}
where a small $\alpha>0$ and a large $\Lambda>0$ will be specified later, satisfies
the evolution equation
\begin{equation} \label{17_new}
\begin{aligned}
\dot {\tilde z} -\Phi' F^{kl}\tilde z_{;kl}
=& -\Lambda \alpha e^{-\Lambda t}+\dot z - \dot \Phi F^{kl}z_{kl}\\
=&  -\Lambda \alpha e^{-\Lambda t}+2 \Phi' F^{kl}b_{rk}b^r_l (\tilde z-\alpha e^{-\Lambda t}) \\
& - 2 (\Phi'F-\Phi)\left(b^i_kb^k_jb^j_i - \gamma B\|b\|^2\right) \\
& + 2 \Phi^{kl, rs}b_{kl;i}b_{rs;}^{\ \ \ j}\left(b^i_j-\gamma B\delta^i_j\right)  - 2 \Phi'F^{kl}\left(b^i_{j;k}b^j_{i;l}-\gamma B_kB_l\right)\\
& -K_N\left\{4\Phi (\tilde z-\alpha e^{-\Lambda t}) - 2nc_0 \Phi'F^{kl}b_{rk}b^r_lB\right\}.
\end{aligned}
\end{equation}

Assuming that $\alpha$ is small we have $\tilde z(0, \cdot) <0$ in view of (\ref{3_new}).
We want to show that $\tilde z(t, \cdot)<0$ for all $0 \le t < T^{*}$. For this we assume that there is a minimal $0<t<T^{*}$ and $x\in M_t$
so that $\tilde z(t,x)=\sup \tilde z(t, \cdot) =0$. 
Due to minimality of $t$ we conclude $B(t',\cdot) > 0$ for all $0 \le t' \le t$. Especially,
\begin{equation}\label{30_new}
\|b\|^2-\frac{1}{n}B^2 < c_0 B^2=c_0(H-n)^2\le c_0 H^2
\end{equation}
and
$M_{t'}$ is strictly horospherically convex for all $0 \le t' \le t$ due to \cite[Lemma~2.2]{AndrewsMcCoy:03/2012}. 
We deduce that 
\begin{equation}
0 < n < F \quad \text{in }[0, t].
\end{equation}

There holds
\begin{equation}
\|b\|^2-\frac{1}{n}B^2 = \tilde c_0B^2 \quad \text{in } (t,x)
\end{equation}
with
\begin{equation}
\tilde c_0 = c_0  -\frac{\alpha e^{-\Lambda t}}{B^2(t,x)}.
\end{equation}

Note, that $0 \le \tilde c_0 < c_0$.
From \cite[Lemma 2.3]{AndrewsMcCoy:03/2012} we deduce  that
\begin{equation}
\begin{aligned}
b^i_k b^k_j b^j_i - & \left(\frac{1}{n}+\tilde c_0\right) B \|b\|^2 >0
\end{aligned}
\end{equation}
in $(t,x)$ if $\tilde c_0>0$. If $\tilde c_0=0$ then $x$ is umbilical point of $M_t$, so let us write $\kappa=\kappa_i$. Then we have 
\begin{equation}
c_0n^2(\kappa-1)^2 = c_0B^2 = \alpha e^{-\Lambda t}
\end{equation}
and
\begin{equation}
\begin{aligned}
b^i_k b^k_j b^j_i - & \left(\frac{1}{n}+ c_0\right) B \|b\|^2 =-c_0n^2(\kappa-1)^3 = -\alpha e^{-\Lambda t}(\kappa-1)
\end{aligned}
\end{equation}
in $(t,x)$.
Using \eqref{17_new}, the maximum principle, \eqref{And_deriv_new} and the fact that 
$B(t,x)>0$ we conclude that 
\begin{equation} \label{18_new}
\begin{aligned}
0 \le & -\Lambda \alpha e^{-\Lambda t}-2\alpha e^{-\Lambda t}\Phi'F^{kl}b_{rk}b^r_l  + I_1 \\
& + 2 B \Phi^{kl, rs}b_{kl;i}b_{rs;}^{\ \ \ j} \left(B^{-1}b^i_j-\gamma\delta^i_j\right) \\
& -\frac{2(n-1)}{n(n+2)}\Phi'\|\nabla B\|^2-\frac{2(n-1)}{3n}\Phi'\|\nabla b\|^2 \\
& - 2 \Phi'\left(F^{kl}-g^{kl}\right)\left(b^i_{j;k}b^j_{i;l}-\frac{1}{n} B_kB_l\right)\\
&+2 c_0 \Phi' \|\nabla B\|^2+2c_0 \Phi' \left(F^{kl}-g^{kl}\right)B_kB_l - 4\alpha \Phi e^{-\Lambda t} 
\end{aligned}
\end{equation}
in $(t,x)$ where
\begin{equation}
I_1 = 
\begin{cases}
2(c_0-\tilde c_0)(\Phi'F-\Phi)B\|b\|^2 \quad &\text{if } \tilde c_0>0, \\
2(\Phi'F-\Phi)\alpha e^{-\Lambda t}(\kappa-1) \quad &\text{if } \tilde c_0=0. 
\end{cases}
\end{equation} 
For $c_0$ sufficiently small, the term 
\begin{equation}
\|F^{kl}-g^{kl}\|
\end{equation}
is small and the terms in the second, fourth and fifth line of (\ref{18_new}) can be absorbed by the terms
in the third line of (\ref{18_new}). Then the right-hand side of (\ref{18_new}) is negative if $\Lambda$ is large, a contradiction. 
\end{proof}

\section{The Euclidean case}\label{EC}
The aim of this section is to prove Theorem \ref{main_result} for the case $K_N=0$. Due to the pinching estimates of the previous section we are in the situation that the proof in \cite{Gerhardt:01/2014} basically carries over literally. For convenience of the reader, and since several elements of the proof simplify due to our pinching estimates, we give an outline of the arguments involved. Throughout this section it is understood that the assumptions of \cref{main_result} hold.

We recall some simple observations. If the initial hypersurface $M_0$ is a sphere of radius $r_0>0$, i.e. $u(0, \cdot)=r_0$, then the flow hypersurfaces of the flow \eqref{4} remain spheres and for their radii $\Theta(t)$ at time $t$ we obtain the ODE
\begin{equation}\label{SphFlow}
\dot \Theta = \frac{\Theta^p}{n^p}, \quad r(0) = r_0,
\end{equation}
with solution 
\begin{equation}
\Theta(t) = \left(\frac{1-p}{n^p}t+r_0^{1-p}\right)^{\frac{1}{1-p}}
\end{equation}
on $[0, T^{*}(r_0))$ where
\begin{equation}\label{sphericaltime}
T^{*}(r_0) = \frac{n^p}{p-1}r_0^{1-p}.
\end{equation}
Hence the spherical flow blows up at time $T^{*}(r_0)$.
The existence of a smooth solution to \eqref{4} up to a maximal time is well known, cf. \cite[Sec.~2.5, Sec.~2.6]{Gerhardt:/2006}.

From the avoidance principle we conclude the following corollary.
\begin{cor} \label{13}
If $r_1, r_2$ are positive constants  so that
\begin{equation}
r_1 < |x(0, \cdot)| < r_2
\end{equation}
where $x$ is the solution of (\ref{4})
then we have
\begin{equation} \label{31}
\Theta(t, r_1) < |x(t, \cdot)| < \Theta(t, r_2) \quad \forall \ 0 \le t < \min\{T^{*}, T^{*}(r_1), T^{*}(r_2)\}.
\end{equation}
\end{cor}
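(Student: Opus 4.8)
The plan is to deduce \eqref{31} from the parabolic comparison (avoidance) principle for the flow \eqref{4}, using the spherical solutions of \eqref{SphFlow} as barriers. Recall from \eqref{SphFlow}--\eqref{sphericaltime} that for each $r>0$ the sphere $S_r$ of radius $r$ about the origin evolves under \eqref{4} to the family of concentric spheres $S_{\Theta(t,r)}$, which exists precisely for $t\in[0,T^{*}(r))$. The hypothesis $r_1<|x(0,\cdot)|<r_2$ says exactly that $M_0$ lies in the open annulus between $S_{r_1}$ and $S_{r_2}$, i.e.\ $M_0$ strictly encloses $S_{r_1}$ and is strictly enclosed by $S_{r_2}$; in particular the origin lies in the interior of the region bounded by $M_0$, and, since the flow expands, it stays interior for all later times.

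Fix $t_0<\min\{T^{*},T^{*}(r_1),T^{*}(r_2)\}$; it suffices to prove \eqref{31} on $[0,t_0]$ and then let $t_0$ increase to the endpoint. On $[0,t_0]$ all three solutions are smooth, and by \cref{pinching} the pinching is preserved (which in the Euclidean case implies $\|A\|^2<\tfrac{1}{n-1}H^2$, hence $\kappa_1>0$), so in geodesic polar coordinates about the origin each $M_t$ is the graph of a function $u(t,\cdot)$ over $\mathbb S^n$ with $|x(t,\cdot)|=u(t,\cdot)$, while $S_{\Theta(t,r_i)}$ is the graph of the spatially constant function $\varphi_i(t):=\Theta(t,r_i)$. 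Both $u$ and the $\varphi_i$ solve the same quasilinear equation, whose second order operator is $\Phi'F^{ij}\partial_i\partial_j$ (cf.\ \cref{EvEq_2}); this operator is uniformly parabolic on $[0,t_0]$ because $\Phi'>0$ and, along the smooth strictly convex flow, $(F^{ij})$ is positive definite with eigenvalues bounded above and below by positive constants. Subtracting the two equations and applying the mean value theorem, the differences $w_i:=u-\varphi_i$ satisfy a linear, uniformly parabolic equation
\[
\dot w_i = a^{kl}w_{i;kl} + b^k w_{i;k} + c\,w_i \qquad\text{on } [0,t_0]\times\mathbb S^n
\]
with continuous coefficients. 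Since $w_1(0,\cdot)>0>w_2(0,\cdot)$, the strong maximum principle yields $w_1>0>w_2$ on all of $[0,t_0]\times\mathbb S^n$, which is \eqref{31}.

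The points requiring care are: (a) the uniform parabolicity of the linearized equation for $w_i$, which rests on the preserved strict convexity (\cref{pinching}) and smoothness of all three flows on the compact interval $[0,t_0]$; and (b) the upgrade from the weak to the strong maximum principle needed for the strict inequalities in \eqref{31} — at a first interior zero $(\tilde t,\tilde y)$ of, say, $w_2$, the strong maximum principle forces $w_2\equiv 0$ on $[0,\tilde t]\times\mathbb S^n$, contradicting $w_2(0,\cdot)<0$ (and symmetrically for $w_1$). I expect (b) to be the only genuinely ``sharp'' step. Alternatively, one may invoke the avoidance principle for compact hypersurfaces moving with a monotone speed directly: at a first contact of $M_t$ with a barrier sphere the contact is one-sided, so the Weingarten maps are ordered at the contact point, hence by monotonicity and $1$-homogeneity of $F$ (so that $F=n/R$ on a sphere of radius $R$) the normal speeds $F^{-p}$ are ordered so that $M_t$ moves outward no faster than the barrier sphere, which is incompatible with a first contact. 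See \cite{Gerhardt:/2006} for the avoidance principle in this setting.
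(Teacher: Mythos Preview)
Your proposal is correct and aligns with the paper's approach: the paper simply states ``From the avoidance principle we conclude the following corollary'' without further detail, so your write-up (both the graph-PDE comparison and the alternative first-contact argument) is an expanded version of exactly what the paper invokes. One minor remark: the appeal to \cref{pinching} for strict convexity is convenient but not strictly necessary here, since the graph representation and positivity of $F$ along the smooth flow on $[0,t_0]$ already follow from \cref{5} and short-time existence; otherwise your argument is fine.
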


The next aim is to show that $\max|x(t,\cdot)|$ blows up, when the time approaches $T^*.$

\begin{lemma}\label{blowup}
The flow (\ref{4}) only exists in a finite time interval $[0, T^{*})$ and there holds
\begin{equation} \label{33}
 \limsup_{t \rightarrow T^{*}}\max_{M}|x(t, \cdot)| = \infty.
\end{equation}
\end{lemma}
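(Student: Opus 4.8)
The plan is to establish the two assertions in turn: first $T^{*}<\infty$, then $\limsup_{t\to T^{*}}\max_{M}|x(t,\cdot)|=\infty$. The finiteness of $T^{*}$ follows by comparing the flow with the interior spherical solution through \cref{13}. The blow-up of $\max_{M}|x|$ is then obtained by contradiction: if the flow stayed inside a fixed ball, the pinching preserved by \cref{pinching} would yield uniform a priori estimates permitting the flow to be continued past $T^{*}$, contradicting maximality.

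For the first part I would center the geodesic polar coordinates at the vertex $q$ of the geodesic sphere over which $M_{0}$ is graphed, so that there is an $r_{1}$ with $0<r_{1}<\min_{M}|x(0,\cdot)|$. By \cref{13}, $|x(t,\cdot)|>\Theta(t,r_{1})$ for all $t<\min\{T^{*},T^{*}(r_{1})\}$. Since $\Theta(t,r_{1})\to\infty$ as $t\uparrow T^{*}(r_{1})$ by \eqref{sphericaltime}, the assumption $T^{*}>T^{*}(r_{1})$ would make $x$ a smooth map on the compact set $[0,T^{*}(r_{1})]\times M$ along which $\min_{M}|x(t,\cdot)|\to\infty$, which is impossible. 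Hence $T^{*}\le T^{*}(r_{1})<\infty$, and in particular \eqref{31} holds on all of $[0,T^{*})$.

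For the blow-up, suppose $\limsup_{t\to T^{*}}\max_{M}|x(t,\cdot)|=:C<\infty$. Writing $M_{t}=\{(u(t,y),y)\cn y\in\S^{n}\}$, the flow expands, so $u$ is non-decreasing in $t$; thus $u\le C$ on $[0,T^{*})$, while \eqref{31} gives $0<r_{1}\le\Theta(t,r_{1})<u(t,\cdot)$, so $u$ is uniformly bounded away from $0$ and $\infty$. From here the estimates run as in \cite{Gerhardt:01/2014}: (i) the maximum principle applied to $\Phi=-F^{-p}$ in \eqref{401} (with $K_{N}=0$ the right-hand side $\Phi'F^{ij}h_{ik}h^{k}_{j}\Phi$ is non-positive) shows $\max_{M}\Phi(t,\cdot)$ is non-increasing, whence $F\le\max_{M_{0}}F$ along the flow; (ii) by \cref{pinching}, i.e. the preserved condition \eqref{3}, the principal curvatures are mutually comparable, hence bounded above by (i); (iii) a gradient bound $v\le\const$ follows from the maximum principle applied to $\chi=v/\vt(u)$ via \eqref{401}, using (i), (ii) and the lower bound on $u$; (iv) a uniform lower bound $F\ge\const>0$ on the speed follows from a further maximum-principle argument on $\Phi$, combined with (iii) and \cref{13}; (v) with (i)--(iv) the function $u$ is uniformly bounded in $C^{2}$ and the equation for $u$ in \cref{EvEq_2} is uniformly parabolic, so parabolic regularity --- with the pinching replacing the concavity of $F$ in the $C^{2,\alpha}$ step, exactly as in \cite{Gerhardt:01/2014} --- yields uniform $C^{k}$ bounds on $[0,T^{*})$ for every $k$. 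The flow therefore extends smoothly beyond $T^{*}$, contradicting the maximality of $T^{*}$; hence $\limsup_{t\to T^{*}}\max_{M}|x(t,\cdot)|=\infty$.

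The main obstacle is step (iv), the lower bound on the speed $F$ (equivalently, control of $\dot u$): steps (i)--(iii) and (v) are routine once the pinching is available, whereas the speed bound is the delicate point, which I would establish following the corresponding argument in \cite{Gerhardt:01/2014}, where the pinching condition again does the work carried there by the structural hypotheses on $F$.
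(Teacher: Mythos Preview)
Your overall strategy matches the paper's: finiteness of $T^{*}$ via barrier spheres, then blow-up by contradiction through uniform a priori estimates. Two specific steps, however, do not go through as you describe them.

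Step (iii). For $p>1$ the evolution of $\chi$ has the wrong sign: the reaction term $(\Phi'F+\Phi)\tfrac{\bar H}{n}v\chi=(p-1)F^{-p}\chi^{2}$ is positive and quadratic in $\chi$, while the good term $-\Phi'F^{ij}h_{ik}h^{k}_{j}\chi$ is only linear in $\chi$. Without a lower bound on $F$ (your step (iv), not yet available) the maximum principle on $\chi$ does not close. The paper sidesteps this entirely: since the pinching in \cref{pinching} forces strict convexity, one simply invokes the geometric fact \cite[Thm.~2.7.10]{Gerhardt:/2006} that a closed convex graph over $\S^{n}$ with bounded height has bounded gradient; no evolution equation is needed for this step.

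Step (v). The phrase ``exactly as in \cite{Gerhardt:01/2014}'' is the problem: there the $C^{2,\alpha}$ estimate comes from Krylov--Safonov, which requires $F$ to be concave, an assumption explicitly dropped in the present paper. Pinching does replace concavity, but not by feeding into Krylov--Safonov; rather, the pinching forces $F^{ij}$ uniformly close to $g^{ij}$, so the linearised operator has coefficients close to the Laplacian, and one applies the parabolic Cordes--Nirenberg $C^{1,\alpha}$ estimate \cite[Lemma~12.13]{Lieberman:/1998} (see also \cite[Thm.~7.3]{AndrewsMcCoy:03/2012}) to obtain H\"older bounds on second derivatives. This is the essential new regularity ingredient compared to \cite{Gerhardt:01/2014} and should be named explicitly. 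Your remaining steps (i), (ii), (iv) agree with the paper; in particular (iv) is handled via the auxiliary function $w=\log(-\Phi)+\log\chi+\gamma u$, following \cite[Lemma~3.10]{Gerhardt:01/2014} as you indicate.
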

\begin{proof}
 In view of Lemma \ref{pinching} the hypersurfaces are convex and from
 (\ref{31}) we deduce that the maximal time $T^{*}$ has to be finite. Due to the convexity we may write the flow hypersurfaces $M_t$ as radial graphs over the sphere,
 \eq{M_t=\{u(t,x)x\cn x\in \S^n\}}
 for some $u\in C^{\8}([0,T^*)\x\S^n).$ Then $u$ satisfies the scalar flow equation
\begin{equation}
 \dot u = \frac{1}{vF^p}
\end{equation}
where the dot indicates the total time derivative, or
\begin{equation} \label{32}
 \frac{\partial u}{\partial t} = \frac{v}{F^p}
\end{equation}
when we consider the partial time derivative, cf. \cite[p.~98-99]{Gerhardt:/2006}. Under the assumption that $|x|$ is bounded, which is equivalent to $u\leq c,$ we also obtain the $C^1(\S^n)$-estimate
\eq{v\leq c}
due to the convexity of $M_t$ and \cite[Thm.~2.7.10]{Gerhardt:/2006}. To obtain a $C^2(\S^n)$-estimate, we need some curvature estimates. The proof is similar to the one for \cite[Lemma~3.10, Lemma.~4.4]{Gerhardt:01/2014}: Define the auxiliary function
\eq{w=\log(-\Phi)+\log\chi +\g u,}
which, due to \cref{EvEq_2}, satisfies the evolution equation
\eq{\label{blowup-1}\dot{w}-\Phi'F^{ij}w_{;ij}&=\Phi'F^{ij}(\log(-\Phi))_{;i}(\log(-\Phi))_{;j}-\Phi'F^{ij}(\log\chi)_{;i}(\log\chi)_{;j}\\
					&\hp{=}+(\Phi'F+\Phi)\fr{\bar{H}}{n}v+\g(\Phi'F-\Phi)v^{-1}-\g\Phi'F^{ij}\bar{h}_{ij}.}
At spatial maxima of $w$ there holds (recall $\chi=\fr vu,$)
\eq{\label{blowup-2}&\Phi'F^{ij}(\log(-\Phi))_{;i}(\log(-\Phi))_{;j}-\Phi'F^{ij}(\log\chi)_{;i}(\log\chi)_{;j}\\
	=~&2\g\Phi'F^{ij}(\log\chi)_{;i}u_{;j}+\g^2\Phi'F^{ij}u_{;i}u_{;j}\\
    =~&-2\g v\Phi'F^{ij}h^k_i u_{;k}u_{;j}+\g^2\Phi'F^{ij}u_{;i}u_{;j},}
    where we used
\eq{v_{;i}=-v^2 h^k_i u_{;k}+v\fr{\-H}{n}u_{;i},}    
cf. \cite[equ.~(5.29)]{Gerhardt:01/1996}. Since 
    \eq{F^{ij}\-h_{ij}=\fr{\-H}{n}F^{ij}\-g_{ij}\geq c F^{ij}g_{ij}\geq cn,}
    due to the pinching estimate,
we have
\eq{\g^2\Phi'F^{ij}u_{;i}u_{;j}-\fr{\g}{2}\Phi'F^{ij}\-h_{ij}\leq \g\Phi'F^{ij}\br{\g u_{;i}u_{;j}-\fr c2 g_{ij}}<0}
for small $\g$
and the other half of the term $F^{ij}\-h_{ij}$ can be used to absorb the other positive terms in \eqref{blowup-1}, since the negatively signed term has the highest order in $\fr{1}{F}$. We obtain that $w$ is a priori bounded. It is immediate from \cref{EvEq_2} and the maximum principle that $F$ is bounded as well. Due to the pinching estimates \cref{pinching} we have 
    \eq{\k_1\geq c\k_n,}
also compare \cite[Lemma~2.2]{AndrewsMcCoy:03/2012}. Hence $\k_n$ must be bounded and $\k_1\geq c>0$ as long as $u$ is bounded. Thus as long as the flow ranges in compact subsets of $\R^{n+1}$ we have uniform $C^2(\S^n)$-estimates. Note that we can not use the Krylov-Safonov theory to deduce $C^{2,\alpha}(\S^{n})$ estimates, since we did not assume any sign on the second derivatives of $F$.

However, due to the pinching estimates we are in the situation that $F^{ij}$ is as close to $g^{ij}$ as we want and hence we can use a parabolic version of the $C^{1,\alpha}$-estimates originally proved by Cordes \cite{Cordes:06/1956} and Nirenberg \cite{Nirenberg:/1954} for linear elliptic equations, cf. 
\cite[Lemma 12.13]{Lieberman:/1998}. This theorem is also stated in \cite[Thm. 7.3]{AndrewsMcCoy:03/2012}, where in addition the reader may find the detailed procedure, how one can get H\"older estimates on the second derivatives of solutions to the curvature flow equation.
Higher order estimates then follow from Schauder theory. Hence at the maximal time of existence $u$ must blow up.    
\end{proof}

Let $r_0>0$ be so that $T^{*}(r_0)=T^{*}$, where $T^{*}(r_{0})$ is given by \eqref{sphericaltime}, then for all $0 \le t < T^{*}$ there is a $\xi_t\in \mathbb{S}^n$ such that
\begin{equation} \label{500}
u(t, \xi_t) = \Theta(t, r_0)
\end{equation}
in view of Corollary \ref{13}. The spherical flow with existence time $T^*(r_0)$ provides the correct scaling factor. In particular, applying the evolution equation for the support function
$\-u=\chi^{-1},$
as deduced in \cite[Section 2]{Urbas:/1991} and the oscillation estimates from \cite[Theorem 3.1]{ChowGulliver:/1996}, also compare \cite[Theorem 3.1]{McCoy:/2003}, we conclude that 
the following lemma holds for
\eq{\~u=u\Theta^{-1}.}

\begin{lemma} \label{602}
 Let $u$ be the solution of the scalar flow equation (\ref{32}). 
 Then there exists a positive constant $c$ such that
 \begin{equation} \label{230}
  u(t, x) -c \le \Theta(t, r_0) \le u(t, x) +c \quad \forall x \in \mathbb{S}^n,
 \end{equation}
 hence
 \begin{equation} \label{201}
  \lim_{t \rightarrow T^{*}} \~u(t,x) =1 \quad \forall x \in \mathbb{S}^n.
 \end{equation}
 We also have
 \begin{equation}
v-1 \le c \Theta^{-1}.
\end{equation}

\end{lemma}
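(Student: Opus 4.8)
The plan is to obtain a uniform-in-$t$ bound on the spatial oscillation of $u$ on $\mathbb{S}^n$, combine it with the pinned point $\xi_t$ from \eqref{500} to get \eqref{230}, and then read off \eqref{201} and the gradient bound from convexity together with elementary convex geometry. Since the flow hypersurfaces are convex by \cref{pinching}, we reparametrise $M_t$ by its Gauss map and let $\bar u=\chi^{-1}$ be the support function, viewed as a function on $\mathbb{S}^n$; in this parametrisation $\bar u$ solves a scalar parabolic equation whose elliptic operator is $F^{-p}$ evaluated at the reciprocals of the eigenvalues of $\bar\nabla^2\bar u+\bar u\,\sigma$, as recorded in \cite[Section~2]{Urbas:/1991}. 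To this equation I apply the Aleksandrov reflection technique of Chow--Gulliver \cite[Theorem~3.1]{ChowGulliver:/1996} (see also \cite[Theorem~3.1]{McCoy:/2003}): reflecting the convex bodies enclosed by $M_t$ across arbitrary hyperplanes and using the parabolic comparison principle shows that $\osc_{\mathbb{S}^n}\bar u(t,\cdot)$ is bounded, for all $t\in[0,T^{*})$, by a constant $c=c(p,F,M_0)$; no concavity of $F$ is needed for this step.

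Next I transfer this to the radial function. For a convex body containing the origin, $u$ and $\bar u$ are related by $u(x)=\inf\{\bar u(z)/\langle x,z\rangle:\langle x,z\rangle>0\}$, so $\min_{\mathbb{S}^n}\bar u\le u(x)\le\bar u(x)$ for all $x$, and hence $\osc_{\mathbb{S}^n}u(t,\cdot)\le\osc_{\mathbb{S}^n}\bar u(t,\cdot)\le c$. By \eqref{500} there is for each $t$ a point $\xi_t$ with $u(t,\xi_t)=\Theta(t,r_0)$, so $\Theta(t,r_0)$ lies between $\min_{\mathbb{S}^n}u(t,\cdot)$ and $\max_{\mathbb{S}^n}u(t,\cdot)$; therefore $|u(t,x)-\Theta(t,r_0)|\le\osc_{\mathbb{S}^n}u(t,\cdot)\le c$ for all $x\in\mathbb{S}^n$, which is \eqref{230}. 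Since $T^{*}(r_0)=T^{*}$, \eqref{sphericaltime} gives $\Theta(t,r_0)\to\infty$ as $t\to T^{*}$, and dividing \eqref{230} by $\Theta(t,r_0)$ yields $\~u(t,x)=u(t,x)\Theta(t,r_0)^{-1}\to 1$ uniformly, which is \eqref{201}.

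Finally, for the gradient bound: by \eqref{230}, for $t$ close to $T^{*}$ the region enclosed by $M_t$ contains the ball $B_{\Theta(t,r_0)-c}(0)$ and is contained in $B_{\Theta(t,r_0)+c}(0)$. At a point $p=u(t,x)x\in M_t$ with outward normal $\nu$ one computes $\langle x,\nu\rangle=v^{-1}$, so the distance from the origin to the tangent hyperplane of $M_t$ at $p$ equals $\langle p,\nu\rangle=u(t,x)v^{-1}$; as this distance is at least the inradius $\Theta(t,r_0)-c$ while $u(t,x)\le\Theta(t,r_0)+c$, we get $v\le(\Theta(t,r_0)+c)/(\Theta(t,r_0)-c)$, hence $v-1\le c\,\Theta^{-1}$ once $\Theta=\Theta(t,r_0)$ is large. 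On any fixed initial compact time interval $(v-1)\Theta$ is continuous, hence bounded, so after enlarging $c$ the estimate $v-1\le c\,\Theta^{-1}$ holds for all $t\in[0,T^{*})$. I expect the only real difficulty to be the oscillation bound of the first paragraph --- verifying that the Chow--Gulliver reflection argument indeed applies to the speed $F^{-p}$ and yields a $t$-independent bound on $\osc_{\mathbb{S}^n}\bar u$; the remaining steps are elementary.
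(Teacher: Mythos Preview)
Your proposal is correct and follows essentially the same route the paper indicates: the paper's own proof is just the one-line reference ``Literally as in \cite[Lemma~3.5, Lemma~3.7]{Gerhardt:01/2014}'', and the sentence preceding the lemma already flags the support-function parametrisation from \cite{Urbas:/1991} together with the Chow--Gulliver/McCoy reflection estimate \cite{ChowGulliver:/1996,McCoy:/2003} as the source of the oscillation bound. You have simply unpacked these citations, added the elementary convex-geometric passage from $\osc\bar u$ to $\osc u$ and the inradius/outradius computation for $v-1\le c\,\Theta^{-1}$; nothing beyond what those references contain is needed, and your caveat about checking applicability of the reflection argument to the speed $F^{-p}$ is exactly the one nontrivial verification (it goes through because the flow satisfies the avoidance principle and no concavity of $F$ is required).
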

\begin{proof}
Literally as in \cite[Lemma~3.5, Lemma~3.7]{Gerhardt:01/2014}.
\end{proof}

We need the following bounds for the rescaled principal curvatures.

\Theo{lemma}{Scaledkappa}{
Under the flow \eqref{4}, the rescaled principal curvatures 
$\~\k_i=\Theta \k_i$
satisfy
\eq{0<c_1\leq \~\k_i\leq c_2}
for suitable constants $c_i=c_i(M_0).$
}

\pf{
Due to the pinching estimates, as in the proof of \cref{blowup} it suffices to prove the existence of constants $c_i$ such that 
\eq{0<c_1\leq \Theta F=\~F=F(\~h^i_j)\leq c_2.}
We begin with the upper bound. For $\l>0$ define the function
\eq{z=\log\~F^p+\l\~u\equiv-\log(-\~\Phi)+\l\~u.}
Due to \cref{EvEq_2} and \eqref{SphFlow}, $z$ satisfies
\eq{\dot{z}-\Phi'F^{ij}z_{;ij}&\leq-\Phi'F^{ij}h_{ik}h^k_j+\fr{p}{n^p}\Theta^{p-1}+\l v^{-1}(p+1)\~F^{-p}\Theta^{p-1}\\
					&\hp{=}-\l p\~F^{-(p+1)}F^{ij}\-g_{ij}\~u^{-1}\Theta^{p-1}-\l n^{-p}\~u\Theta^{p-1}\\
                    &\leq \fr{\Theta^{p-1}}{n^p}\br{p-\fr{\l}{2}\~u}+\l\Theta^{p-1}\br{v^{-1}(p+1)\~F^{-p}-\fr{n^{-p}}{2}\~u},}
which is negative if $\l$ and $\~F$ are large enough, due to \cref{602}.

To prove the lower bound we proceed as in \cite[Lemma~3.10]{Gerhardt:01/2014}.
We consider the function
  \begin{equation}
  w = \log(-\tilde \Phi) + \log \tilde \chi + \tilde u,
 \end{equation}
 where $\~\chi=\Theta\chi.$
 Let $0<T<T^{*}$ be arbitrary and assume that $\sup_{Q_T}w$, where
 \begin{equation}
 Q_T= [0, T] \times \mathbb{S}^n,
 \end{equation} 
 is attained in $(t_0, x_0)$ with large $t_0>0$. The maximum principle implies in $(t_0, x_0)$ 
 \begin{equation} \label{301}
 \begin{aligned}
  0 \le&  \Phi' F^{ij}(\log(-\tilde \Phi ))_i(\log(-\tilde \Phi ))_j  - \Phi' F^{ij}(\log \tilde \chi)_i(\log \tilde \chi)_j \\
  & + c \tilde F^{-p}\Theta^{p-1}-p\tilde F^{-(p+1)}F^{ij}\bar g_{ij}\tilde u^{-1}\Theta^{p-1}
  \end{aligned}
 \end{equation}
 where we assume w.l.o.g. that $\tilde F$ is small. The fourth term on the right-hand side of (\ref{301}) is dominating the third term. From $w_i=0$ we conclude
 \begin{equation}\label{730}
 \begin{aligned}
  &\Phi' F^{ij}(\log(-\tilde \Phi))_i(\log(-\tilde \Phi))_j- \Phi' F^{ij}(\log \tilde \chi)_i(\log \tilde \chi)_j 
\\
  =~&\Phi' F^{ij}\tilde u_i\tilde u_j + 2 \Phi' F^{ij}(\log \tilde \chi)_i \tilde u_j.
 \end{aligned}
 \end{equation}
 Here the first term on the right-hand side is of order $\tilde F^{-(p+1)}$ but $\|Du\|$ vanishes if $t$ tends to $T^{*}$ while the second term is nonpositive for the same reason as in \eqref{blowup-2}.
 Hence $w$ is a priori bounded from above.
}

The proofs of Theorem \ref{main_result} (i) and Theorem \ref{main_result} (ii) in the Euclidean case can be completed literally as in \cite[Lemma~5.1]{Gerhardt:01/2014}, up to replacing Krylov-Safonov by the Cordes-Nirenberg type result as above.

The proof of Theorem \ref{main_result} (iii) 
follows exactly the arguments in \cite{Scheuer:07/2016} by
using Theorem \ref{main_result} (i) and Theorem \ref{main_result} (ii) instead of \cite{Gerhardt:01/2014}
as it is used in \cite{Scheuer:07/2016}.

\section{The hyperbolic case}\label{HC}

Due to the pinching estimates in the hyperbolic case, \cite[Lemma~2.2]{AndrewsMcCoy:03/2012} implies that the flow hypersurfaces remain strictly horospherically convex. Hence several aspects of the proofs in \cite{Scheuer:05/2015} simplify. We can easily prove the following lemma with the help of a shortcut compared to \cite[Thm.~4.4]{Scheuer:05/2015}.

{\thm{\label{HypDecay}
The flow \eqref{flow} exists for all times, remains strictly horospherically convex and the principal curvatures of the flow hypersurfaces of \eqref{flow} converge to $1$ exponentially fast,
\eq{|\k_{i}-1|\leq ce^{-\fr{2}{n^{p}}t}\quad\forall 0\leq t<\infty,}
where $c=c(n,p,M_{0})$.
}}

\pf{From the evolution of the flow speed
\eq{-\Phi=\fr{1}{F^{p}},}
\eq{\label{HypDecay-1}\dot \Phi- \Phi' F^{ij}\Phi_{;ij} = \Phi' F^{ij}\br{h_{ik}h^k_j-g_{ij}}\Phi,}
cf. \cref{EvEq_2}, we obtain that $\max F$ is strictly decreasing. Hence by the pinching estimates all principal curvatures are bounded and due to the convexity of the flow hypersurfaces we also have uniform gradient estimates using \cite[Thm.~2.7.10]{Gerhardt:/2006}. Since in finite time the flow remains in a compact subset of $\H^{n+1}$, we obtain the long time existence similarly as in the Euclidean case. We can estimate \eqref{HypDecay-1} with the help of the pinching estimate:
\eq{\fr{d}{dt} (-\Phi)- \Phi' F^{ij}(-\Phi)_{;ij} &= \Phi' F^{ij}\br{h_{ik}h^k_j-g_{ij}}(-\Phi)\\
			&\geq c\br{\fr 1n H^{2}-n}\\
			&\geq c(H-n).}
Hence $\max F$ must converge to $n$, for otherwise there existed $\delta>0$ with 
\eq{F(t,\xi_{t})=\max_{M_{t}}F\geq n+\delta}
for all $t>0$ and hence
\eq{\k_{n}(t,\xi_{t})\geq 1+\tilde{\delta}}
for some $\tilde{\delta}>0$. But then
\eq{c\tilde{\delta}\leq \fr{d}{dt}\br{\min_{M_{t}}F^{-p}}}
for almost every $t>0$ and $\min F^{-p}$ would converge to infinity, which is impossible. But this implies $\max \k_{1}\ra 1$ and due to the pinching estimates \cref{pinching} we have 
\eq{\max \k_{n}\ra 1.}

Set 
\eq{w=\left(\Phi+n^{-p}\right)e^{\fr{\alpha}{n^{p}} t},}
where $0<\alpha<2$.
From \eqref{401} we deduce 
\begin{equation}\label{evolution_w}
\begin{aligned}
\dot{w}-\Phi'F^{kl}w_{kl}&=\Phi'F^{ij}\left(h_{ik}h^k_j-g_{ij}\right)\Phi e^{\frac{\alpha}{n^p}t}+\frac{\alpha}{n^p}e^{\frac{\alpha}{n^p}t}\left(\Phi+n^{-p}\right).
\end{aligned}
\end{equation}
Writing $\kappa_i=1+\delta_i$ with appropriate small $\delta_i\ge 0$ we obtain the following first order expansions
\eq{|A|^2-n=2\sum_{i=1}^{n}\delta_i+O(\delta_n^{2})}
and 
\eq{\Phi+n^{-p}=\frac{p}{n^{p+1}}\sum_{i=1}^n\delta_i+O(\delta_n^{2}).}
Furthermore, we have $\Phi' \Phi=-pF^{-2p-1}$ so that the leading term in (\ref{evolution_w}) for large $t$
is
\begin{equation} \label{evolut_error_term}
e^{\frac{\alpha}{n^p}t}\frac{p}{n^{2p+1}}\sum_{i=1}^n\delta_i(-2+\alpha).
\end{equation}
Hence $w$ is bounded for any $\alpha<2$ and thus
\eq{\delta_{n}\leq ce^{-\fr{\alpha t}{n^{p}}}.}

In the second step set
\eq{\~w=\left(\Phi+n^{-p}\right)e^{\fr{2}{n^{p}}t}}
and deduce 
\eq{\dot{\~w}-\Phi'F^{kl}\~w_{;kl}\leq c\delta_n^2e^{\frac{2}{n^p}t}\le c e^{\frac{2-2\alpha}{n^p}t},} 
where we used that the corresponding first order terms in the evolution equation for $\tilde w$, compare (\ref{evolut_error_term}), now vanish.
 The result follows from the maximum principle since $2-2\alpha<0$, e.g. for $\alpha=3/2$.
}

In order to show that also the gradient of the graph functions $u$ converges to zero exponentially fast, we use the conformally flat parametrization of the hyperbolic space.

{\lemma{\label{GradDecay}
Let the $M_{t}$ be expressed as graphs in geodesic polar coordinates,
\eq{M_{t}=\graph_{\S^{n}} u(t,\cdot),}
then the quantity 
\eq{v=\sqrt{1+\-g^{ij}u_{i}u_{j}}}
satisfies
\eq{|Du|^{2}=v^{2}-1\leq ce^{-\fr{2}{n^{p}}t}.}
}}

\pf{
Defining the radial coordinate $\rho$ by
\eq{r=\log(2+\rho)-\log(2-\rho),}
the hyperbolic space can be parametrised over the ball $B_{2}(0)$ to yield
\eq{\-g=\fr{1}{\br{1-\fr 14\rho^{2}}^{2}}\br{d\rho^{2}+\rho^{2}\s_{ij}dx^{i}dx^{j}}=e^{2\psi}\br{d\rho^{2}+\rho^{2}\s_{ij}dx^{i}dx^{j}}.}
Viewing the flow hypersurfaces $M_{t}\sub\H^{n+1}$ as hypersurfaces $\~M_{t}\sub B_{2}(0)$ of the Euclidean space, the second fundamental forms are related by
\eq{\label{GradDecay-1}e^{\psi} h^{i}_{j}=\~h^{i}_{j}+\psi_{\b}\~\nu^{\b}\d^{i}_{j},}
cf. \cite[Prop.~1.1.11]{Gerhardt:/2006}.
Hence
\eq{\|\~A\|^{2}-\fr 1n \~H^{2}=e^{2\psi}\br{\|A\|^{2}-\fr 1n H^{2}}\leq ce^{-\fr{2}{n^{p}}t},}
due to \cref{HypDecay} and since
\eq{e^{\psi}=\fr{4}{(2+\~u)(2-\~u)}\leq ce^{\fr{t}{n^{p}}},}
where we also used the existence of a constant $c$ such that
\eq{-c<u-\fr{t}{n^{p}}<c,}
cf. \cite[Cor.~3.3, Lemma~3.5]{Scheuer:05/2015}.
Hence the conformal hypersurfaces $\~M_{t}$ become eventually strictly convex (also note $\~H>0$ using \eqref{GradDecay-1}) and they converge uniformly to $\del B_{2}(0).$ Using 
\eq{\~v\leq ce^{\-\k\cdot\mrm{osc}(\~u)}}
again, \cite[Thm.~2.7.10]{Gerhardt:/2006}, we obtain $\~v\ra 1.$
But 
\eq{|D\~u|^{2}=\~u^{-2}\s^{ij}\~u_{i}\~u_{j}=|Du|^{2},}
as a simple calculation reveals, and hence $v\ra 1$ for the $M_{t}.$ From the proof of \cite[Thm.~4.1]{Scheuer:05/2015} and especially equ. (4.6) in this proof we obtain
\eq{v-1\leq ce^{-\l t}}
for suitable $\l>0$. Furthermore, using the proof of \cite[Thm.~4.5]{Scheuer:05/2015} literally, we obtain the claim.
}

The proof of \cref{main_result} can now be finished as in \cite{Scheuer:05/2015}. Let us shortly sketch the strategy. Define a rescaling of $u$, 
\eq{\p=\int_{r_{0}}^{u}\fr{1}{\vt(s)}~ds,}
where $r_{0}<\inf_{M}u(0,\cdot)$ and $\vt=\sinh$. There holds
\eq{|Du|^{2}=\s^{ij}\p_{i}\p_{j}=|D\p|^{2}\leq ce^{-\fr{2}{n^{p}}t}.}
Due to 
\eq{h^{i}_{j}=v^{-1}\vt^{-1}\br{\vt'\d^{i}_{j}-\br{\s^{ik}-v^{-2}\p^{i}\p^{k}}\p_{kj}},}
where index raising and covariant differentiation are performed with respect to $\s_{ij},$ cf. \cite[(3.26)]{Gerhardt:11/2011}, we also obtain
\eq{|D^{2}\p|\leq ce^{-\fr{t}{n^{p}}},}
also compare \cite[Thm.~4.6]{Scheuer:05/2015}. Now Sections 5 and 6 of \cite{Scheuer:05/2015} finish the proof verbatim, where Section 5 gives higher order estimates of the form
\eq{|D^{m}\p|\leq ce^{-\fr{t}{n^{p}}}}
by differentiating the equation satisfied by $D\p$ and Section 6 relates this back to the original function $u$ and its rescaled versions.

\bibliographystyle{amsplain}
\bibliography{Bibliography.bib}

\end{document}